\newtheorem{theorem}{Theorem}
\newtheorem{definition}{Definition}
\newtheorem{lemma}{Lemma}
\newtheorem{remark}{Remark}
\newtheorem{proposition}[theorem]{Proposition}
\newtheorem{corollary}[theorem]{Corollary}
\newtheorem*{assumption*}{Assmuption}
\DeclareMathOperator{\ord}{ord}
\DeclareMathOperator{\Exp}{Exp}
\title{Isotopy equivalence of analytic branches in $(\mathbb{C}^n,0)$}
\author{P. Fortuny Ayuso}
\address{Dpto. Matemáticas, Universidad de Oviedo. Oviedo, Spain.}
\email{fortunypedro@uniovi.es}
\subjclass[2020]{32S15, 14B05, 14H20, 14H50}
\date{\today}
\definecolor{nuevo}{RGB}{0,0,120}
\begin{document}
\newcommand{\mex}[1]{\ensuremath{\left\lceil #1 \right\rceil}}
\begin{abstract}
  We prove that two analytic branches in $(\mathbb{C}^n,0)$ whose dual resolution graph is the same admit an ambient isotopy which is smooth outside the origin. A weaker version of the converse is also proved.
\end{abstract}
\maketitle
\section{Introduction}
The equivalence between equisingularity (in all its usual definitions) and topological equivalence of plane analytic branches over the complex plane is well-known since the works of Brauner \cite{Brauner}, Kh\"aler \cite{Kahler}, and Zariski \cite{Zariski-1965} (see, for example \cite{Wall} for a modern approach). For analytic curves in $(\mathbb{C}^n,0)$ there is no such result for any usual definition of equisingularity. As a matter of fact, the existence of many non-equivalent notions (examples due to Prof. Vicente Córdoba can be seen in \cite{Campillo}, and some more in \cite{Castellanos-2005}) seems to render this problem more complicated: what definition of equisingularity properly reflects the topology of the singularity?

In this brief note we give a partial answer to that question: two analytic singular curves in $(\mathbb{C}^n,0)$ whose resolution of singularities have the same dual graph are ambient isotopic, and the isotopy is smooth outside the singular point. The converse (the one we can prove) requires a technical condition which does not seem essential but is enough for our purposes: to obtain a combinatorial object which gives topological information on the singularity.

It is our conviction that the dual graph is a complete topological invariant, as in the planar case, but we do not master the required techniques to prove it.

\section{Setting and notation}
Let $\gamma=\gamma_0$ be a germ of analytic branch in $(\mathbb{C}^n,0)$, that is: a non-constant analytic map $\gamma:(\mathbb{C},0)\rightarrow (\mathbb{C}^n,0)$. The \emph{resolution of singularities of $\gamma$} is the (unique) non-empty finite sequence of point blowing-ups
\begin{equation}\label{eq:resolucion-of-gamma}
  \Pi\equiv \mathcal{X}_r \stackrel{\pi_r}{\longrightarrow}\mathcal{X}_{r-1}
  \stackrel{\pi_{r-1}}{\longrightarrow}\cdots\stackrel{\pi_2}{\longrightarrow}
  \mathcal{X}_1\stackrel{\pi_1}{\longrightarrow}(\mathbb{C}^n,0)=\mathcal{X}_{0}
\end{equation}
where $\pi_i$ is the blow up of $\mathcal{X}_{i-1}$ with center the center $P_{i-1}$ of the germ $\gamma_{i-1}$, and $\gamma_i=\pi^{-1}_i(\gamma_{i-1})$. For each $i$, $E_i=\pi_{i}^{-1}(P_{i-1})$ is the exceptional divisor of $\pi_i$. By definition, $r$ is the minimum integer such that $P_r$ is non-singular for $\gamma_r$ and transverse to the non-empty exceptional divisor $E=\Pi^{-1}(0)$, which is also non-singular at $P_r$.

Let $F$ be an irreducible component of the whole exceptional divisor $E$. We shall abuse notation and denote $F$ by $E_i$ in what follows if $\pi_{r}\circ\cdots\circ \pi_{i+1}(F)=E_i$ (i.e. $F$ ``appears'' when blowing-up $P_{i-1}$).

\begin{definition}\label{def:dual-graph}
  The \emph{dual graph} of $\gamma$ (or of $\Pi$) is the graph whose vertices $\mathcal{V}_{\gamma}$ are the irreducible components $E_i$ of the exceptional divisor $E=\Pi^{-1}(P_{0})$, and whose set of edges is:
  \begin{equation*}
    \mathcal{E}_{\gamma} = \left\{ (i,j) : E_i\cap E_j\neq \emptyset \right\}.
  \end{equation*}
\end{definition}

One may also turn to complexes and use trios $(i,j,k)$ when $E_i\cap E_j\cap E_k\neq \emptyset$ but in the case of branches and their resolution such an intersection is non-empty  if and only if $(i,j), (i,k)$ and $(j,k)$ belong to $\mathcal{E}_{\gamma}$, and thus we refrain from doing so.

From now on we work in $\mathbb{C}^{3}$ for simplicity, all the arguments carrying over to the general case without any difficulty.

Fix a set of coordinates $(x,y,z)$ in $(\mathbb{C}^3,0)$ and a possibly empty normal-crossings divisor $E$ whose equation is
\begin{equation*}
  E\equiv x^{\epsilon_1}y^{\epsilon_2}z^{\epsilon_3}=0
\end{equation*}
where $\epsilon_i\in \left\{ 0,1 \right\}$. Let $\gamma$ and $\eta$ be two non-singular curves with respective tangent vectors $(a_1,a_2,a_3)$, $(b_1,b_2,b_3)$ such that if $\epsilon_i=1$ then $a_i=0$ if and only if $b_i=0$ (that is, they have the same tangency relations with the irreducible components of $E$). The following lemma is the cornerstone of our results. 
\begin{lemma}\label{lem:motion-with-fixed-planes}
  With the above notations, there is a $\mathcal{C}^{\infty}$ vector field $X$ in $(\mathbb{C}^3,0)$ tangent to $E$, leaving $(0,0,0)$ fixed, which sends $\gamma$ to a curve tangent to $\eta$.
\end{lemma}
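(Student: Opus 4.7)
My strategy is to realise the required vector field as the linear field $X(x)=Ax$ associated to a matrix $A$ in the Lie subalgebra $\mathfrak{g}\subset\mathfrak{gl}(3,\mathbb{C})$ consisting of those matrices whose $j$-th row is a multiple of the coordinate vector $e_j$ for every $j$ with $\epsilon_j=1$. Set $J:=\{j:\epsilon_j=1\}$. Tangency of $X$ to $E$ is equivalent to $A\in\mathfrak{g}$, and the time-$1$ flow of $X$ is the linear map $\exp(A)$. Hence the proof splits into two parts: first, exhibit an invertible matrix $M\in G:=\mathfrak{g}\cap GL(3,\mathbb{C})$ whose action on tangent vectors carries the direction of $a=(a_1,a_2,a_3)$ onto that of $b=(b_1,b_2,b_3)$; second, extract a logarithm of $M$ inside $\mathfrak{g}$.

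\emph{Step 1: constructing $M$.} For $j\in J$, the $j$-th row of $M$ must be $M_{jj}e_j$ and $(Ma)_j$ reduces to $M_{jj}a_j$. Thanks to the hypothesis $a_j=0$ iff $b_j=0$, the equation $M_{jj}a_j=\mu b_j$ admits a non-zero solution $M_{jj}$ once a scalar $\mu\neq 0$ is fixed (take $M_{jj}=\mu b_j/a_j$ when $a_j\neq 0$ and $M_{jj}=1$ otherwise). For $j\notin J$ the whole $j$-th row of $M$ is free apart from the single linear equation $\sum_k M_{jk}a_k=\mu b_j$; a brief case analysis on whether $a$ has any non-zero coordinate in $J^c$ shows that these free parameters leave enough room to make $M$ globally invertible.

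\emph{Step 2: logarithm and conclusion.} The key observation is that $\mathfrak{g}$ is not merely a Lie subalgebra but an \emph{associative} subalgebra of $M_3(\mathbb{C})$ containing $I$: if $A,B\in\mathfrak{g}$ and $j\in J$, $k\neq j$, then $(AB)_{jk}=A_{jj}B_{jk}=0$, so $AB\in\mathfrak{g}$. Consequently $\mathfrak{g}$ contains every polynomial in any of its elements. Now apply the multiplicative Jordan decomposition $M=M_sM_u$: both $M_s$ and $M_u$ are polynomials in $M$ and hence lie in $\mathfrak{g}$; $\log M_u$ is a finite polynomial in the nilpotent matrix $M_u-I$, while $\log M_s$ is a linear combination of the spectral projectors of $M_s$, each itself a polynomial in $M_s$. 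The matrix $A:=\log M_s+\log M_u$ therefore lies in $\mathfrak{g}$, its two summands commute, and $\exp(A)=M$. The linear vector field $X(x):=Ax$ is $\mathcal{C}^\infty$ and vanishes at $0$; for $j\in J$ its $j$-th component is $A_{jj}x_j$, vanishing on $\{x_j=0\}$, so $X$ is tangent to $E$. Writing $\gamma(s)=sa+O(s^2)$, the time-$1$ flow sends $\gamma$ to the curve $s\mapsto M\gamma(s)=s\mu b+O(s^2)$, whose tangent at the origin is parallel to that of $\eta$.

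\emph{Main obstacle.} The technical heart is Step 2, namely producing a logarithm of $M$ that remains inside $\mathfrak{g}$ rather than somewhere in $\mathfrak{gl}(3,\mathbb{C})$. The observation that $\mathfrak{g}$ is an associative subalgebra cleanly reduces the question to Jordan decomposition; without this observation one would have to decompose $M$ into a product of one-parameter subgroups of $G$, or replace $X$ by a time-dependent vector field and appeal to the usual integration of isotopies, both of which are heavier tools than really needed here.
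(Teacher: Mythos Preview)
Your argument is correct and takes a genuinely different route from the paper. The paper writes down an explicit (not always linear) vector field by hand: after normalising so that $a_1\neq 0$ and $x$ parametrises both tangent lines, the $x$-component of $X$ is set to zero, and for each remaining index $i$ the $i$-th component is the translation $b_i(x)-a_i(x)$ when $\epsilon_i=0$ and the dilation $c_i\cdot(\text{$i$-th coordinate})$ with $c_i=\log(b_i/a_i)\big|_{x=0}$ when $\epsilon_i=1$; the time-$1$ flow is then checked directly in a representative case. Your approach is more structural: you restrict to \emph{linear} fields from the outset, locate the desired time-$1$ map inside the associative subalgebra $\mathfrak g\subset M_3(\mathbb C)$ of matrices tangent to $E$, and produce its logarithm in $\mathfrak g$ via Jordan decomposition and the observation that $\mathfrak g$ is closed under polynomial functions of its elements. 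The payoff of your route is a uniform, coordinate-free argument that avoids the paper's case split on each $\epsilon_i$; the payoff of the paper's explicit formula is that it immediately upgrades (as exploited in the subsequent corollary) to send $\gamma$ \emph{exactly} onto $\eta$, not merely tangently, whenever the relevant coordinate ratios $b_i(x)/a_i(x)$ are units --- something a single linear map cannot do in general.
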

\begin{proof}
  Without loss of generality, given the hypotheses, we may assume that $a_1\neq 0$ so that $x$ is a parameter for the tangent lines $\dot{\gamma}=(x,\dot{a}_2(x),\dot{a}_3(x))$ and $\dot{\eta}=(x,\dot{b}_2(x),\dot{b}_3(x))$. Fix a determination of the logarithm. If $\ord_xa_i(x)=\ord_xb_i(x)=1$, then define $C_{i}(x)=\log(a_i(x)/b_i(x))$, and $c_i=C_i(0)$ otherwise either $\epsilon_i=1$ or, if $\epsilon_i=0$ then both $\ord_xa_i(x)$ and $\ord_xb_i(x)$ are at least $2$. In this latter case, set $c_{i}=0$. Let now:
  \begin{multline*}
    X = \bigg(
      0,
      (1-\epsilon_2)(b_2(x)-a_2(x)) + \epsilon_2 yc_2,
      (1-\epsilon_3)(b_3(x)-a_3(x)) + \epsilon_3 zc_3
    \bigg)
  \end{multline*}
  (where, by convention, $0\times K=0$ even if $K$ is not defined: this is to avoid useless repetitions). This is just a compact way of writing: \emph{the $i$-th coordinate of $X$ is equal to $b_i(x)-a_i(x)$ if $\epsilon_i=0$, and to $yc_i$ if $\epsilon_i=1$.}

  Let us show that $X$ satisfies the statement. For simplicity (the other cases follow exactly the same reasoning), we only consider the case $\epsilon_1=1$, $\epsilon_2=0$, $\epsilon_3=1$ (we do this case explicitly to convey the gist of the argument, as the two alternatives above are covered). Assume $\dot{a}_3(0)\neq 0$, so that $\dot{b}_3(0)\neq 0$ as well (if both are zero then the $z$-component of $X$ is $(z\times 0)=0$). The differential equation associated to $X$ is:
  \begin{equation}\label{eq:edo}
    \left\{
      \begin{array}{l}
        \dot{x} = 0\\[.5em]
        \dot{y} = b_2(x)-a_2(x)\\[.5em]
        \dot{z} = z \log(c_3)
      \end{array}
    \right.
  \end{equation}
  where $c_3=\lim_{x\rightarrow0}b_3(x)/a_3(x)$. The solutions of \eqref{eq:edo} for the initial condition $(x_{0},y_{0},z_{0})$ at time $1$ are:
  \begin{equation*}
    \Psi(x_0,y_0,z_0) = \left( x_{0},
      y_0+b_2(x_{0})-a_2(x_{0}),
      c_3z_{0}
    \right)
  \end{equation*}
  which sends the point $(x,a_2(x),a_3(x))$ to $(x,b_2(x),b_3(x))$. The fact that each irreducible component of $E$ is invariant for $X$ is obvious from the equations of $X$ and $E$, and also that $(0,0,0)$ is a fixed point of $X$.
\end{proof}

\begin{remark}
  Notice in the proof above that if $\ord_x(a_i(x)) = \ord_x(b_i(x)) = 1$, then we can always take the $i$-th component of $X$ to be $c_i(x)=\log(b_i(x)/a_i(x))$, and in this specific case, $X$ sends the $i$-th coordinate of $\gamma$ to the $i$-th coordinate of $\eta$ ``completely''.
\end{remark}

From this remark follows:
\begin{corollary}\label{cor:last-divisor}
  If $(P_i^1)=(P_i^2)$ for $i=0,\ldots, r$, then there is a vector field in a neighborhood $U$ of $P_r^1=P_r^2$ sending $\gamma_r$ to $\eta_r$ leaving $E_{r}\cap U$ invariant.
\end{corollary}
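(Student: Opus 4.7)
The plan is to deduce the corollary as a direct application of Lemma~\ref{lem:motion-with-fixed-planes}, strengthened by the subsequent Remark, to the local situation at $P_r := P_r^1 = P_r^2$. The hypothesis $(P_i^1) = (P_i^2)$ for $i=0,\ldots,r$ implies, by induction on $i$, that the ambient spaces $\mathcal{X}_i$ and the blow-ups $\pi_i$ for the two resolutions coincide, and that at every stage the infinitely near points---hence the tangent directions---of $\gamma$ and $\eta$ agree. At $P_r$ both strict transforms $\gamma_r$ and $\eta_r$ are therefore non-singular, share the same tangent line, and are transverse to the exceptional divisor $E$. I would then choose local analytic coordinates $(x,y,z)$ on a neighborhood $U$ of $P_r$ adapted to $E$, so that $E\cap U$ is cut out by $x^{\epsilon_1}y^{\epsilon_2}z^{\epsilon_3}=0$ with $\epsilon_i\in\{0,1\}$, arranging that $E_r$ equals one of the factors $\{x_i=0\}$ having $\epsilon_i=1$.

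Writing the tangent vectors of $\gamma_r$ and $\eta_r$ at $P_r$ as $(a_1,a_2,a_3)$ and $(b_1,b_2,b_3)$, transversality of each branch to every component of $E$ forces $a_i,b_i\neq 0$ whenever $\epsilon_i=1$. The compatibility condition of Lemma~\ref{lem:motion-with-fixed-planes} (namely, when $\epsilon_i=1$, $a_i=0$ iff $b_i=0$) is therefore vacuously satisfied. The lemma then produces a $\mathcal{C}^\infty$ vector field $X$ on $U$ that vanishes at $P_r$, is tangent to every component of $E$ (and so, in particular, to $E_r\cap U$), and whose time-one flow carries $\gamma_r$ to a curve tangent to $\eta_r$.

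To strengthen ``tangent to $\eta_r$'' into ``equal to $\eta_r$'', I would appeal to the Remark. Since both branches are non-singular and transverse to $E$, after renumbering the coordinates one may take the first coordinate $x$ as a common parameter for both branches, and then $\ord_x a_i=\ord_x b_i=1$ for every $i$ with $\epsilon_i=1$; replacing each constant $c_i$ in the construction of Lemma~\ref{lem:motion-with-fixed-planes} by the analytic germ $C_i(x)=\log(b_i(x)/a_i(x))$ yields a new vector field, still tangent to $E$ and vanishing at $P_r$, whose time-one flow sends $\gamma_r$ exactly onto $\eta_r$. I do not foresee a real technical obstacle; the only bookkeeping step is to confirm that the chosen parametrizing coordinate is compatible with the normal-crossings structure of $E$ at $P_r$, which is guaranteed precisely by the transversality noted above.
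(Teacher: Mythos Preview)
Your approach coincides with the paper's: choose coordinates at $P_r$ with $E_r=\{x=0\}$, parametrize both non-singular transverse branches by $x$, and invoke Lemma~\ref{lem:motion-with-fixed-planes} together with the Remark to write down an explicit vector field tangent to $E_r$ whose time-one flow carries $\gamma_r$ onto $\eta_r$. Two small corrections that do not affect the argument: first, the hypothesis $(P_i^1)=(P_i^2)$ for $i\le r$ does \emph{not} force $\gamma_r$ and $\eta_r$ to share a tangent line at $P_r$ (equality of $P_i$ only pins down the tangent direction at $P_{i-1}$), but you never actually use this claim; second, since the resolution makes $E$ non-singular at $P_r$, locally only $E_r$ is present, so exactly one $\epsilon_i$ equals $1$---in that situation the $\epsilon_j=0$ components $b_j(x)-a_j(x)$ from Lemma~\ref{lem:motion-with-fixed-planes} already send $\gamma_r$ exactly to $\eta_r$, making the appeal to the Remark harmless but unnecessary.
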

\begin{proof}
  Let $E_r\equiv x=0$ for simplicity. In the proof of Lemma \ref{lem:motion-with-fixed-planes}, we know that $a_j(x)$ and $b_j(x)$ are all parametrized by $x$, as both curves are non-singular and transverse to $E_r$. Let $X$ be
  \begin{equation*}
    X = (0, \log(b_2(x)/a_2 (x)), \log(b_3(x)/a_3(x)))
  \end{equation*}
  which is well defined because all the quotients are units, by transversality to $x=0$. This vector field sends $\gamma_r$ to $\eta_r$ in a neighborhood of $P_r$.
\end{proof}

\section{Same Dual Graph implies Isotopic Equivalence}
\begin{theorem}
  Two analytic branches at $(\mathbb{C}^n,0)$ which have the same dual resolution graph are ambient isotopic, and the isotopy can be taken to be smooth away from the origin.
\end{theorem}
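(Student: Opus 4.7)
I would align the two resolutions stage by stage, using Lemma~\ref{lem:motion-with-fixed-planes} at each level to produce a local vector field that forces the next centre of $\gamma$ to coincide with that of $\eta$, and then push the field down through the blow-ups to a field on $(\mathbb{C}^n,0)$ smooth away from $0$; once all centres agree, Corollary~\ref{cor:last-divisor} finishes the job, and the ambient isotopy is obtained by concatenating the resulting flows. A preliminary observation is that the hypothesis of same dual graph forces, at each intermediate stage $i$, the strict transforms $\gamma_i$ and $\eta_i$ to have the same combinatorial incidence with the local components of the exceptional divisor $E^{(i)}$, for any discrepancy in tangency would be detected by later blow-ups and would yield a different graph. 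In a normal-crossings chart around a point of $E^{(i)}$ this means the tangent directions of $\gamma_i$ and $\eta_i$ vanish in exactly the same coordinates indexed by $\epsilon_j=1$, which is precisely the hypothesis of Lemma~\ref{lem:motion-with-fixed-planes}.

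The induction runs over $i=0,\dots,r-1$: I maintain an ambient diffeomorphism $\Phi_i$ of $(\mathbb{C}^n,0)$, smooth off $0$ and fixing $0$, such that the first $i+1$ centres of $\Phi_i(\gamma)$ coincide with $P_0^{\eta},\dots,P_i^{\eta}$. At level $i$, the strict transform $(\Phi_i(\gamma))_i$ may still be singular at $P_i$, but its tangent line $T_\gamma$ is a non-singular curve through $P_i$ whose tangency pattern with $E^{(i)}$ agrees with that of $\eta_i$'s tangent line $T_\eta$. Applying Lemma~\ref{lem:motion-with-fixed-planes} to $T_\gamma$ and $T_\eta$ in a chart at $P_i$ yields a $\mathcal{C}^\infty$ field tangent to $E^{(i)}$ and fixing $P_i$ whose time-$1$ flow carries $T_\gamma$ to a curve tangent to $T_\eta$; this aligns the tangent direction of the strict transform with that of $\eta_i$, and hence forces the next centre to land at $P_{i+1}^{\eta}$. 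Cutting off the field with a bump function supported near $P_i$ and preserving tangency to $E^{(i)}$, and pushing it down through the blow-ups $\pi_i,\dots,\pi_1$, produces an increment $\Psi_{i+1}$ on $(\mathbb{C}^n,0)$; set $\Phi_{i+1}=\Psi_{i+1}\circ\Phi_i$. After $r$ steps all the centres $P_0,\dots,P_r$ coincide, so Corollary~\ref{cor:last-divisor} supplies a field near $P_r$ whose flow sends the top strict transform of $\Phi_r(\gamma)$ onto $\eta_r$; pushing it down gives the last increment, and concatenating the flows yields the ambient isotopy.

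The step I expect to be the main obstacle is the blow-down itself: a smooth vector field on $\mathcal{X}_i$ tangent to every local component of the exceptional divisor must push forward under $\pi_1\circ\cdots\circ\pi_i$ to a field that is $\mathcal{C}^\infty$ away from the origin and extends continuously across the origin as a zero. In affine charts the inverse blow-up substitution introduces first-order poles in the exceptional coordinate, and tangency to the divisor is exactly what makes these poles cancel; nevertheless, assembling this check across all $r$ blow-ups and verifying global smoothness off the origin is the only non-formal part of the argument.
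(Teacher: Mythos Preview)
Your proposal is correct and follows essentially the same strategy as the paper: both argue by induction on the number of infinitely near points already shared, invoke Lemma~\ref{lem:motion-with-fixed-planes} at the first level of disagreement to align tangent directions (hence the next centre), extend the local field by a cutoff, push it forward through the blow-ups, and finish with Corollary~\ref{cor:last-divisor} once all centres coincide. Your version is in fact slightly more careful in two respects---you apply Lemma~\ref{lem:motion-with-fixed-planes} explicitly to the tangent \emph{lines} (the lemma is stated for non-singular curves), and you single out the blow-down step as the only non-formal point---whereas the paper treats both issues rather tersely.
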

\begin{proof}
  Let $\gamma^{1}$ and $\gamma^{2}$ be the branches in the statement. For a sequence of blow-ups like $\Pi$, $\gamma^1_i$ and $\gamma^2_i$ will represent their respective strict transforms at $\mathcal{X}_i$, and $P^1_i$, $P^2_i$ their intersection with the exceptional divisor (their respective infinitely near points).

  Both sequences $(P^{1}_i)$ and $(P_i^2)$ have the same length $r$ because the dual graphs are the same. Let $k$ be the first index such that $P^{1}_j=P^{2}_j$ for $j=0,\ldots, k-1$ and $P^1_k\neq P^2_k$. We reason by induction on $n=r-k$.

  Case $n=0$. Assume, for convenience, that $E_r=(x=0)$. As $\gamma^{1}_{r}$ and $\gamma^{2}_r$ are non-singular and transverse to $E_{r}$, Corollary \ref{cor:last-divisor} gives a vector field in an open neighborhood $U$ of $P_r$ sending $\gamma^1_r$ to $\gamma^2_r$. Let now $W$ be a closed ball $W\subset U$ and $Y$ be the null vector field in $V=\mathcal{X}\setminus \mathring{W}$. A partition of unity for the cover $\left\{ U,V \right\}$ gives rise to a vector field $Z$ which leaves the whole exceptional divisor $E$ invariant and sends $\gamma^1_{r}$ to $\gamma^2_{r}$. This proves the basis step, as $Z$ can be pulled-forward to $(\mathbb{C}^3,0)\setminus \left\{ 0 \right\}$. Its extension to the origin by the null vector is trivially continuous and we get the desired isotopy which is $\mathcal{C}^{\infty}$ outside the origin.

  Assume the result is true for $n-1=r-(k+1)$, and consider the case $n=r-k$. At $P_k$, the curves $\gamma^1_k$ and $\gamma^2_k$ have the same tangency relations with the exceptional divisor $E_k$ (otherwise they would not give rise to the same dual graph), but by hypothesis, their tangent lines are different. By Lemma \ref{lem:motion-with-fixed-planes} we can define a $\mathcal{C}^{\infty}$ vector field on a neighborhood of $P_k$ sending $\gamma^1_k$ to a curve $\overline{\gamma}^1_{k}$ tangent to $\gamma^2_k$. This vector field, by the same argument as before, can be extended to the whole $\mathcal{X}_k$. The curves $\gamma^1$ and $\overline{\gamma}^1:=\pi_k(\overline{\gamma}^1_k)$ are isotopic by the argument above, and the curves $\overline{\gamma}^{1}$ and $\gamma^2$ share the same infinitely near points up to $k+1$, so that they are also isotopic by induction. This completes the proof.
\end{proof}

The converse we can prove is weaker but possibly informative. The proof is essentially contained in the statement, and is done by induction on the number of shared infinitely near points.
\begin{proposition}
  Assume $\gamma$ and $\eta$ are two analytic branches in $(\mathbb{C}^n,0)$. If there is a sequence $(X_i)_{i=0}^{r}$ of $\mathcal{C}^{\infty}$ vector fields $X_i$ defined in $(\mathbb{C}^n,0)\setminus \left\{ 0 \right\}$ and branches $\tilde{\gamma}_{i}$ such that:
  \begin{enumerate}
  \item The branch $\tilde{\gamma}_i$ and $\gamma$ share the first $i$ infinitely near points $(P_j)_{j=0}^i$,
  \item Each pull back $\pi^{-1}(X_{i})$ can be extended in a neighborhood $U_{i}$ of $P_{i}\in E_i$ to a vector field $\tilde{X}_i$ leaving $E_i\cap U_i$ invariant,
  \item The flow $\Exp(\tilde{X}_i)$ in $U_i$ sends $\pi^{-1}_{i}(\tilde{\gamma}_i)$ to $\pi^{-1}_{i}(\tilde{\gamma}_{i+1})$,
  \item Finally, $\tilde{\gamma}_r=\eta$
  \end{enumerate}
  then $\gamma$ and $\eta$ have the same dual graph.
\end{proposition}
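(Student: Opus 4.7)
The plan is to take $\tilde\gamma_0=\gamma$ (condition (1) at $i=0$ is vacuous, so this is consistent) and, using $\tilde\gamma_r=\eta$ from (4), reduce by induction to showing that consecutive branches $\tilde\gamma_i$ and $\tilde\gamma_{i+1}$ share the same dual graph for each $i\in\{0,\ldots,r-1\}$. Transitivity will then conclude.

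\textbf{Inductive step.} Fix $i$. I will exploit that, by conditions (2) and (3), the time-one flow $\Phi_i:=\Exp(\tilde X_i)$ is a local diffeomorphism of $U_i$ around $P_i$ which preserves $E_i\cap U_i$ setwise and maps the strict transform of $\tilde\gamma_i$ at level $i$ to that of $\tilde\gamma_{i+1}$. Since both branches pass through $P_0,\ldots,P_i$ by condition (1), their dual graphs agree on the initial segment. For the tail beyond level $i$, I would invoke the functoriality of point blow-ups under local diffeomorphisms: $\Phi_i$ lifts canonically to a local diffeomorphism of $\mathrm{Bl}_{P_i}(U_i)$ which preserves the new exceptional component, sends strict transform to strict transform, and hence carries the next infinitely near point of $\tilde\gamma_i$ to that of $\tilde\gamma_{i+1}$. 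Iterating through subsequent blow-ups produces a bijection between the resolutions beyond $P_i$ that respects membership in each exceptional component and thus all pairwise incidences; since the termination criterion ``smooth strict transform transverse to $E$'' is a diffeomorphism invariant, the two resolutions have the same length. Hence $\tilde\gamma_i$ and $\tilde\gamma_{i+1}$ have the same dual graph.

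\textbf{Main obstacle.} The hard part will be the iterated lifting: one must verify that at each stage the lift of $\Phi_i$ remains a local diffeomorphism on a neighborhood of the current infinitely near point, and that the total exceptional divisor is preserved componentwise. The first point should follow from the universal property of blow-ups applied to a local diffeomorphism fixing the blow-up centre; the second, from $\Phi_i$ preserving $E_i$ combined with induction on the number of subsequent blow-ups. Once this bookkeeping is in place, the combinatorial identification of the dual graphs reduces to tracking the bijection of infinitely near points, which is automatic.
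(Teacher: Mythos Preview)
Your route differs from the paper's: the paper does induction on the length of the shorter resolution, blowing up once and observing that the hypotheses (1)--(4) descend to $\gamma_1,\eta_1$ with one fewer shared infinitely near point, together with the remark that a flow tangent to $E_1$ preserves tangency of curves to $E_1$.  You instead try to prove directly that each consecutive pair $\tilde\gamma_i,\tilde\gamma_{i+1}$ has the same dual graph by lifting the single diffeomorphism $\Phi_i=\Exp(\tilde X_i)$ through \emph{all} subsequent point blow-ups.

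The gap is precisely the lifting step you flag as the ``main obstacle'', and the fix you propose does not work.  The universal property of the blow-up of a point is a statement in the holomorphic (or algebraic) category; it does not give a lift of an arbitrary $\mathcal{C}^\infty$ diffeomorphism.  Concretely, the exceptional divisor of the complex blow-up at $P_i$ is $\mathbb{CP}^{n-1}$, the space of \emph{complex} lines in $T_{P_i}\mathcal{X}_i$, and a lift of $\Phi_i$ would have to send complex lines to complex lines.  For that, $d\Phi_i|_{P_i}$ must be $\mathbb{C}$-linear, which nothing in hypotheses (2)--(3) guarantees.  Knowing that $d\Phi_i|_{P_i}$ happens to carry the tangent line of $\tilde\gamma_i$ (a complex line) to that of $\tilde\gamma_{i+1}$ (also a complex line, because both branches are analytic) is not enough to produce a local diffeomorphism of $\mathrm{Bl}_{P_i}(U_i)$ near the next infinitely near point, let alone to iterate.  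Notice also that if your lifting argument went through, the step $i$ alone would show that any two analytic branches related by a single $\mathcal{C}^\infty$ flow (tangent to one divisor) have the same dual graph---essentially the full converse that the paper explicitly leaves open.

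What the paper's induction exploits, and your chain argument discards, is exactly the level-by-level adaptedness built into (1)--(3): one never needs to lift a smooth flow beyond the level at which it is already given; instead one blows up once, reduces $r$ by one, and only uses that a flow tangent to a divisor preserves tangency of curves to that divisor (a first-order statement that does survive in the $\mathcal{C}^\infty$ category).
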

\begin{proof}
  We reason by induction on the length $r$ of the shorter resolution of singularities of $\eta$ or $\gamma$. If $r=0$ then the result is obvious because the statement means that there is a flow sending $\gamma$ to $\eta$ and that one of them is non-singular. Hence, the other must also be non-singular.

  Assuming the case $n\geq 0$ true, consider the case $n+1\geq 1$. This implies that $\gamma$ and $\eta$ are tangent at $0$. let $\gamma_1$ and $\eta_1$ be their strict transforms by $\pi_1$, which meet at $P_1$ and share $n$ infinitely near points. All the four conditions are met by $\gamma_1$ and $\eta_1$, so that they have the same dual graph. Moreover, if one is tangent to $E_1$, then so is the other, as flows respect tangency relations. This implies that $\gamma$ and $\eta$ have the same dual graph and the proof is concluded.
\end{proof}

\end{document}